\tikzset{
    bt/.style={draw=blue,thick},
    ns/.style={circle,draw=blue,fill=blue, inner sep=0pt, minimum size=2mm},
    string/.style={draw=#1, postaction={decorate}, decoration={markings,mark=at position .45 with {\arrow[blue]{triangle 60}}}},
    doublestring/.style={draw=#1, postaction={decorate}, decoration={markings, mark=at position .7 with {\arrow[blue]{triangle 60}}, 
    mark=at position .3 with {\arrowreversed[blue]{triangle 60}}}},
    costring/.style={draw=#1, postaction={decorate}, decoration={markings,mark=at position .55 with {\arrow[draw=#1]{<}}}},
    arr/.style={string=blue, thick},
    doublearr/.style={doublestring=blue, thick},
    lin/.style={blue},
    dlin/.style = {blue, dashed, thick},
    dot/.style={circle,draw=#1,fill=#1,inner sep=1pt},
}
\numberwithin{equation}{section}
\theoremstyle{definition}
\newtheorem{Thm}{Theorem}[section]
\newtheorem{Cor}{Corollary}[section]
\newtheorem{Lemma}{Lemma}[section]
\newtheorem{Rem}{Remark}[section]
\newcommand{\sh}[1]{\mathcal{#1}}
\DeclareMathOperator{\id}{id}
\DeclareMathOperator{\SL}{SL}
\DeclareMathOperator{\C}{\mathbb{C}}
\DeclareMathOperator{\Z}{\mathbb{Z}}
\DeclareMathOperator{\Proj}{\mathbb{P}}
\DeclareMathOperator{\Gm}{\mathbb{G}_m}
\DeclareMathOperator{\Dual}{\mathbb{D}}
\DeclareMathOperator{\Aff}{\mathbb{A}}
\DeclareMathOperator{\central}{cent}
\DeclareMathOperator{\can}{can}
\DeclareMathOperator{\inc}{inc}
\title{Twisted $\sh{D}$-module extensions of local systems on a certain subvariety isomorphic to $\Gm^2$ of the affine flag variety of $\SL_2$}
\begin{document}

\author{Claude Eicher}
\address{Skolkovo Institute of Science and Technology, Moscow, Russia}
\email{C.Eicher@skoltech.ru}
\date{\today}
\setcounter{tocdepth}{3}
\maketitle

\setstretch{1.4}

\begin{abstract}
We introduce a family of rank-one local systems in the category of twisted $\sh{D}$-modules on a certain subvariety isomorphic to $\Gm^2$
of the affine flag variety of $\SL_2$. We then give a criterion for these local systems, in terms of their parameters, to extend cleanly
in the sense of $\sh{D}$-modules. 
\end{abstract}

\tableofcontents

\section{Introduction}
\subsection{Overview}
In the present work we prove a basic result concerning a $\sh{D}$-module construction based on a subvariety $O$ of the affine flag variety
of $\SL_2$ over $\C$, isomorphic to $\Gm^2$ and contained in a two dimensional Schubert cell. Namely, we construct a family of rank-one local systems in the category of arbitrarily complex
twisted right $\sh{D}$-modules on $O$. The twist is w.r.t. the usual torsor over the affine flag variety. The family is naturally parametrized by two complex numbers describing the monodromies of the local system, henceforth called monodromy parameters, and its construction depends on a choice of coordinates in $O$.\par
Our result, 
Theorem \autoref{Thm:cleanness}, states a criterion for the $\sh{D}$-module extension of such a local system to the affine flag variety to be clean: certain linear combinations of the twist and monodromy parameters should be non-integral. Its proof consists of a straightforward computation in 
coordinates on an open cover, by four affine planes, of the Zariski closure of $O$.\par
The result only depends on the embedding
of $O$ into its Zariski closure, or more precisely on the embedding of the corresponding total spaces of the above mentioned torsor, and not on the
 embedding of $O$ into the flag variety itself. 
A reason we nevertheless use the affine flag variety to formulate our result is that we would like to indicate an implication for 
the representation of the affine Kac-Moody algebra $\widehat{\mathfrak{sl}_2}$ in the space of global sections of the
twisted $\sh{D}$-module extensions under consideration. And in fact we expect that the structure of $\widehat{\mathfrak{sl}_2}$-representations of the
kind of these global sections, which is at present poorly understood, can be elucidated further using $\sh{D}$-module 
 techniques. \par

\subsection{Context} The subvariety $O$ can be compared with the
subvariety $X_{s_i} \cap s_i X_{s_i}$, isomorphic to $\Gm$, of the affine flag variety of any almost simple and simply connected linear
algebraic group over $\C$. Here $X_{s_i}$ denotes the (one dimensional) Schubert cell associated with the simple reflection $s_i$ of the affine Weyl
group. In \cite{Eic16b}, \cite{Eic20} we consider $\sh{D}$-module constructions based on a
 Kummer local system on $X_{s_i} \cap s_i X_{s_i}$, in the case of
integral and arbitrary complex twist, respectively. The twisted $\sh{D}$-module extensions of \cite{Eic20} analogous to the
ones considered presently are called $\sh{R}_{?s_i,i,\lambda, \mu}$ there. We will not recall the definition of these twisted $\sh{D}$-modules
on the affine flag variety, but content ourselves
with stating that they are constructed in a  natural way from the choice $? \in \{!,*\}$ of $!$- or $*$-extension, a simple affine root $i$,
an arbitrary complex affine weight $\lambda$ describing the twist, and the complex number $\mu$ describing the monodromy of the Kummer local system. The analogue of Theorem \autoref{Thm:cleanness} states that the
canonical morphism in the sense of \cite{Ber} $\sh{R}_{!s_i,i,\lambda,\mu} \rightarrow \sh{R}_{*s_i,i,\lambda,\mu}$ is an isomorphism if and only if $\mu \notin \Z$
and $\lambda(h_i)-\mu \notin \Z$. Here $h_i$ denotes the coroot.\par

At the level of orbit stratifications, the situation is as follows. The group scheme  whose $R$-valued points are
\begin{align}\label{eq:groupSL2}
\left\{\left. g = \begin{pmatrix} A_0 + t^2a & t^2 b \\ t^2 c & D_0+t^2d\end{pmatrix}\; \right \vert A_0, D_0 \in R,\; a,b,c,d \in R[[t]],\; \det g = 1\right\} \rtimes R^{\times}
\end{align}
acts in a natural way on the affine flag variety of $\SL_2$ and plays the role the group scheme $I \cap {^{s_i} I}$
does in \cite{Eic16b}, \cite{Eic20}. Here $R^{\times}$ acts by loop rotations, i.e. by the automorphism induced by $t \mapsto rt$, $I$ denotes an Iwahori group
of the loop group, and $^{s_i}I$ its $s_i$-conjugate. The subvariety $O$ is an orbit for this action and any other orbit of dimension less or equal
than two arises as an orbit for the action of a group scheme strictly containing the one defined by \eqref{eq:groupSL2}. In this sense,
$O$ can be considered as the basic orbit for this group action.
In the same way, the subvariety $X_{s_i} \cap s_i X_{s_i}$ can be considered as the basic orbit for the action of the group $I \cap {^{s_i} I}$. 

\section*{Acknowledgements}
We would like to thank B.L. Feigin and G. Felder for discussions related to this article. 

\section{Notation}
When it is clear from the indication of its domain and codomain, we sometimes, for simplicity, omit from notation that we understand a restriction 
of the morphism under consideration. Any inclusion of a subvariety is denoted by $\inc$. The sheaf-theoretic direct image w.r.t. a morphism $f$ is denoted by $f_{\cdot}$. When $R$ is a commutative $\C$-algebra, we denote by $R^{\times}$ its group of invertible elements. 
The restriction of a line bundle or torsor $\sh{F}$ to a subvariety $S$ is denoted by $\sh{F} \vert S$. 

\section{Setup}
Let $\SL_2((t))$ be the algebraic loop group of $\SL_2$ over $\C$. 
Let $X = \SL_2((t))/I$ be the affine flag variety of $\SL_2$, an ind-projective ind-variety over $\C$.
Let $\widetilde{\pi}: \widetilde{X} = \SL_2((t))/I^u \rightarrow \SL_2((t))/I$ be the canonical projection, a $T^{\circ}$-torsor.  Here 
$I$ is the standard Iwahori group scheme of $\SL_2((t))$ with $R$-valued points
\begin{align}\label{eq:RpointsofI}
I(R) = \left\{\left. g=\begin{pmatrix} a & b \\ tc & d\end{pmatrix}\; \right \vert \; a,b,c,d \in R[[t]],\; \det g = 1\right\}\;,
\end{align}
$I^u$ is the pro-unipotent radical of $I$, and $T^{\circ} \subseteq \SL_2$ is the torus of diagonal matrices.  Let $\sh{C}$ be the level line bundle on $\SL_2((t))/I$, see e.g. \cite{Zhu10}. It is normalized such that $\sh{C} \vert (P_i/I) \cong \sh{O}_{\Proj^1}(1)$
for each $i \in \{0,1\}$, where $\SL_2((t)) \supseteq P_i \supseteq I$ is the parabolic subgroup associated to $i$.  
We denote $\widetilde{\sh{C}} = \widetilde{\pi}^* \sh{C}$, a line bundle on $\widetilde{X}$. For any subvariety $S \subseteq X$ we
denote $\widetilde{S} = \widetilde{\pi}^{-1}(S) \subseteq \widetilde{X}$. 
Let $(\cdot)^{\times}$ denote the $\Gm$-torsor of invertible sections of a line bundle.
We may, and will, in order to distinguish it from other $\Gm$ appearing in the text, denote the structure group $\Gm$ of 
$\sh{C}^{\times}$ and $\widetilde{\sh{C}}^{\times}$ by $\Gm^{\central}$. Here $^{\central}$ stands for ``central''
because of the possible interpretation of $\Gm^{\central}$ in terms of the central extension of $\SL_2((t))$, which, however, will
not play an important role in this article. 
Let $X_w$ be the $I$-orbit in $X$, i.e. the finite dimensional Schubert cell, associated to the element $w$
of the affine Weyl group $W$ of $\SL_2$
and $\overline{X_w}$ its Zariski closure in $X$, the corresponding Schubert variety. Let $s_i$, $i \in \{0,1\}$, denote the
two simple reflections in $W$.
 We recall that the Demazure resolution $(P_1 \times_I P_0)/I \rightarrow \overline{X_{s_1s_0}}$ is an isomorphism,
 in particular $\overline{X_{s_1s_0}}$ is a $\Proj^1$-bundle over $\Proj^1$.

\section{Local coordinates and trivializations}
\subsection{Local coordinates on $\overline{O}$}
We set $O(R) = \begin{pmatrix} t & R^{\times} t^{-1}+R^{\times} \\ 0 & t^{-1}\end{pmatrix}I(R)$, then 
$O(R)$ are the $R$-valued points of a locally closed subvariety $O$ of $X$.  

\begin{Lemma} \label{Lemma:Phi_i}
There are open subsets $\{U_i\}_{i=1,2,3,4}$ of $\overline{X_{s_1s_0}}$ such that
the isomorphism $O \xrightarrow{\cong} \Gm^2$ given by
\begin{align*}
\begin{pmatrix} t  & a_{-1}t^{-1}+a_0 \\ 0 & t^{-1}\end{pmatrix}I \mapsto &(a_{-1},a_0) \\
\mapsto &\left(\frac{1}{a_{-1}}, \frac{a_0}{a_{-1}^2}\right) \\
\mapsto &\left(a_{-1}, \frac{1}{a_0}\right) \\
\mapsto &\left(\frac{1}{a_{-1}}, \frac{a_{-1}^2}{a_0}\right)
\end{align*}
extends to an isomorphism $U_i \xrightarrow{\cong} \Aff^2$ for $i =1,2,3,4$, respectively. We
denote the inverse of this isomorphism by $\Phi_i: \Aff^2 \xrightarrow{\cong} U_i$. 
\end{Lemma}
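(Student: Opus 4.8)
The statement is a concrete assertion about the Demazure resolution $\overline{X_{s_1s_0}} \cong (P_1 \times_I P_0)/I$, which by the remark at the end of the Setup is a $\Proj^1$-bundle over $\Proj^1$. The plan is to exhibit the four affine charts of this $\Proj^1$-bundle explicitly in terms of the matrix coordinates used to define $O$, and then check directly that the four rational maps written in the Lemma are the restrictions to $O$ of the inverse chart maps $\Phi_i^{-1}$. First I would set up coordinates on $(P_1\times_I P_0)/I$: write a point of $P_1$ modulo its Borel as a point of $\Proj^1$ with homogeneous coordinates, and likewise for the $P_0$-fiber, obtaining the standard atlas of a $\Proj^1$-bundle by four affine pieces $U_1,\dots,U_4$, each isomorphic to $\Aff^2$ via $(\text{base chart})\times(\text{fiber chart})$. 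The key computation is to express the canonical map $(P_1\times_I P_0)/I \to \overline{X_{s_1s_0}} \subseteq X$ in these coordinates, i.e. to multiply out the relevant $2\times 2$ matrices over $R$ (or $R((t))$) modulo $I(R)$, and read off where the locally closed subvariety $O$ sits.

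Concretely, I would start from the representative $\begin{pmatrix} t & a_{-1}t^{-1}+a_0 \\ 0 & t^{-1}\end{pmatrix}$ of a point of $O$ and factor it, modulo $I$, through $P_1$ and $P_0$. Using the normalization $\sh{C}\vert(P_i/I)\cong\sh{O}_{\Proj^1}(1)$ and the identifications $P_1/I\cong\Proj^1$, $P_0/I\cong\Proj^1$, the two "slopes" appearing are governed by $a_{-1}$ and by the combination $a_0/a_{-1}$ (or its reciprocals), which is exactly why the four listed formulas are $(a_{-1},a_0)$, $(1/a_{-1}, a_0/a_{-1}^2)$, $(a_{-1}, 1/a_0)$, $(1/a_{-1}, a_{-1}^2/a_0)$: the first coordinate is the base-$\Proj^1$ coordinate in one of its two charts ($a_{-1}$ or $1/a_{-1}$), and the second is the fiber-$\Proj^1$ coordinate in one of its two charts, suitably twisted by the transition function of the bundle (which accounts for the $a_{-1}^2$ factors — the fiber line bundle has degree related to $2$, consistent with $s_1 s_0$). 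So the steps are: (1) fix the $\Proj^1$-bundle atlas; (2) compute the Demazure map in matrix coordinates restricted to $O$; (3) match the four resulting pairs of rational functions with the four formulas in the Lemma; (4) observe that each such pair of rational functions, viewed on the corresponding chart $U_i\cong\Aff^2$, is a genuine regular isomorphism onto $\Aff^2$ extending the given map on the open subset which is the image of $O$.

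**The main obstacle.** The genuinely delicate point is step (2)–(3): carefully carrying out the matrix factorization through $P_1\times_I P_0$ and keeping track of which coset representative one picks, so that the four charts come out in precisely the normalization that yields the stated formulas (in particular the exponents on $a_{-1}$ in charts $2$ and $4$, and which of $a_0,1/a_0$ and $a_{-1},1/a_{-1}$ pair up). Once the dictionary between matrix entries and bundle coordinates is pinned down, checking that $O$ lands in the locus where all denominators are invertible, and that the maps extend to isomorphisms $U_i\xrightarrow{\cong}\Aff^2$, is immediate since a $\Proj^1$-bundle is by construction covered by such charts. Everything else — compatibility of the four formulas on overlaps, reconstructing $O\xrightarrow{\cong}\Gm^2$ — is routine bookkeeping with Laurent polynomials in $t$ modulo $I(R)$, and I would not grind through it in detail; I would instead state the charts, do one representative factorization carefully, and indicate that the remaining three are analogous.
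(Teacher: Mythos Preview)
Your approach is correct and would yield the lemma, but it differs from the paper's proof in emphasis. The paper does not explicitly set up the $\Proj^1$-bundle atlas of $(P_1\times_I P_0)/I$ and then match coordinates; instead it works directly with the matrix representative $\begin{pmatrix} t & a_{-1}t^{-1}+a_0 \\ 0 & t^{-1}\end{pmatrix}$ and computes, by right-multiplication with suitable elements of $I$, the limits of this coset as $a_{-1}\to\infty$ (with $a_0/a_{-1}^2$ fixed) and as $a_0\to\infty$ (with $a_{-1}$ fixed), together with the analogous limits along the boundary strata. These limit computations exhibit concretely that each of the four rational maps in the statement extends across the relevant coordinate hyperplanes, and the paper leaves implicit that the resulting charts are isomorphisms onto $\Aff^2$ (which is what the Demazure picture guarantees).

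Your route trades the ad hoc limit computations for the structural fact that a $\Proj^1$-bundle over $\Proj^1$ is covered by four standard affine charts, and then identifies which Hirzebruch surface one is looking at (the $a_{-1}^2$ in the transition functions indeed shows it is $\mathbb{F}_2$). This buys you a cleaner explanation of \emph{why} there are exactly four charts and \emph{why} the transition functions look as they do, at the cost of having to pin down the factorization $P_1\times_I P_0\to X$ carefully in coordinates---which, as you correctly flag, is the only place real work happens. The paper's approach is shorter on the page but less transparent about the underlying geometry; either proof is acceptable, and the matrix manipulations you would do in your step (2)--(3) are essentially the same as the paper's right-$I$-multiplications.
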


\begin{proof}
That the isomorphisms extend follows from the fact that
\begin{align*}
\begin{pmatrix} t & a_{-1}t^{-1}+a_0 \\ 0 & t^{-1}\end{pmatrix} I &\to \begin{pmatrix} 1 & -\frac{a_{-1}^2}{a_0}t^{-1} \\ 0 & 1\end{pmatrix}I\qquad
a_{-1} \to \infty,\; \frac{a_0}{a_{-1}^2}\; \text{fixed}\\
&\to \begin{pmatrix}  -a_{-1} & 1 \\ -1 & 0\end{pmatrix}I \qquad a_{-1}\; \text{fixed},\;  a_0 \to \infty
\end{align*}
and that

\begin{align*}
\begin{pmatrix} 1 & a_{-1}t^{-1} \\ 0 & 1\end{pmatrix} I &\to \begin{pmatrix} 0 & t^{-1} \\ -t & 0\end{pmatrix}I\qquad a_{-1} \to \infty \\
\begin{pmatrix} t & a_{-1}t^{-1} \\ 0 & t^{-1}\end{pmatrix}I &\to \begin{pmatrix} 0 & t^{-1} \\ -t & 0\end{pmatrix}I \qquad a_{-1} \to \infty \\
\begin{pmatrix}  t & a_0 \\  0 & t^{-1}\end{pmatrix}I &\to \begin{pmatrix} 0 & 1 \\ -1 & 0\end{pmatrix}I \qquad a_0 \to \infty\\
\begin{pmatrix} a_0 & 1 \\ -1 & 0\end{pmatrix}I &\to I\qquad a_0 \to \infty\;,
\end{align*}
which is easily shown by multiplying from the right by a suitable element
of $I$. 
\end{proof}

For any $i$, we denote by $(x,y)$ the coordinates of the source of $\Phi_i$.  
We have $U_1 = X_{s_1s_0}$, hence $\overline{O} = \overline{X_{s_1s_0}}$, and  $\overline{O} = \bigcup_{i=1}^4 U_i$, i.e. $\{U_i\}_{i=1,2,3, 4}$ is an open cover of $\overline{O}$. 

\subsection{Local trivializations of $\widetilde{\pi} \vert \overline{O}$}

In this section we write down a trivialization of the $T^{\circ}$-torsor $\widetilde{\pi} \vert U_i$ for $i  = 1,2,3,4$. 
\begin{Lemma} \label{Lemma:trivializationstauiTcirc}
Set $g = \begin{pmatrix} t & a_{-1}t^{-1}+a_0 \\ 0 & t^{-1}\end{pmatrix}$.
The section $O \rightarrow \widetilde{O}\;,\; g I \mapsto g f_i I^u$\;,\; of $\widetilde{\pi}$ extends
to a section $\sigma_i: U_i \rightarrow \widetilde{U_i}$, $i = 1,2,3,4$, of $\widetilde{\pi}$, where
\begin{align*}
f_1 = \begin{pmatrix} 1 & 0 \\ 0 & 1\end{pmatrix}\;\,\; f_2 = \begin{pmatrix} a_{-1} & 0 \\ 0 & \frac{1}{a_{-1}}\end{pmatrix}\; 
f_3 = \begin{pmatrix} a_0 & 0 \\ 0 & \frac{1}{a_0}\end{pmatrix}\; f_4 = \begin{pmatrix} \frac{a_0}{a_{-1}} & 0 \\ 0 & \frac{a_{-1}}{a_0}\end{pmatrix}\;.
\end{align*}
\end{Lemma}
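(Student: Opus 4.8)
The plan is to verify the extension one chart $U_i \cong \Aff^2$ at a time; in each chart it reduces to a codimension-one computation that is a minor variant of the limit calculations in the proof of Lemma \autoref{Lemma:Phi_i}. Since every morphism from a variety into $\widetilde{X}$ factors through a finite-dimensional piece, one may work throughout inside the $T^{\circ}$-torsor $\widetilde{\pi}^{-1}(\overline{O}) \to \overline{O}$. That the formula defines a section over $O$ is immediate: on $O$ the functions $a_{-1}$, $a_0$ and $a_0/a_{-1}$ are everywhere invertible, so each $f_i$ is a section of $T^{\circ} \subseteq I$ over $O$, whence $\widetilde{\pi}(g f_i I^u) = gI$ and $gI \mapsto g f_i I^u$ is a morphism $\sigma_i^{\circ} \colon O \to \widetilde{O}$ splitting $\widetilde{\pi}\vert O$. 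Under $\Phi_i$ the subset $O$ is $\Gm^2 \subseteq \Aff^2$, so $U_i \setminus O$ is the union of the two coordinate axes, meeting only in the origin. As $\widetilde{\pi}\vert U_i$ is a $\Gm$-torsor on $\Aff^2$ it is trivial; a choice of trivialization identifies sections of $\widetilde{\pi}$ over an open $V \subseteq U_i$ with $\mathcal{O}^{\times}(V)$, and since $\Aff^2$ is normal with the origin of codimension two, $\mathcal{O}^{\times}(U_i \setminus \{0\}) = \mathcal{O}^{\times}(U_i)$, so any section defined on $U_i \setminus \{0\}$ already extends over $U_i$. Hence it suffices to extend $\sigma_i^{\circ}$ over a dense open subset of each of the two boundary divisors of $U_i$; the remaining codimension-two locus is then filled in automatically, and the extension remains a section of $\widetilde{\pi}$ because it is one on the dense subset $O$.

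The core is then the following, carried out for each of the (at most two, several shared among the $U_i$) boundary divisors $D$. Writing $z$ for the chart coordinate cutting out $D$ and $w$ for the other, I localize at the generic point of $D$ and work over the corresponding discrete valuation ring (with residue field the function field $\C(w)$ of $D$) and its fraction field. Substituting $a_{-1}$, $a_0$ in terms of $(z,w)$ according to the relevant formula of Lemma \autoref{Lemma:Phi_i} turns $g f_i$ into an explicit matrix in $\SL_2\big(\C(w)(z)((t))\big)$ with a pole along $z=0$; I then exhibit an element $u \in I^u\big(\C(w)(z)\big)$, itself singular along $z=0$, for which $g f_i u$ becomes regular there. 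Its coefficients are pinned down recursively, order by order in $z$ and in $t$, by the requirement that the four entries of $g f_i u$ have no pole in $z$, the leading behaviour of $g f_i u$ as $z \to 0$ being governed by the limit computations of Lemma \autoref{Lemma:Phi_i}. This $M := g f_i u$ is a morphism from a neighbourhood of the generic point of $D$ into $\widetilde{\pi}^{-1}(\overline{O})$ agreeing with $\sigma_i^{\circ}$ on $O$, i.e. the desired extension across $D$.

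The one genuinely delicate point is the bookkeeping in this recursive regularization: one must check that the entries of $u$ retain only nonnegative powers of $t$ where membership in $I^u$ demands it, and that the $z=0$ value of $g f_i u$ is chosen correctly. This is precisely what the diagonal factor $f_i$ is for: it cancels the $T^{\circ}$-valued twist that $gI^u$ acquires around $D$, so that $g f_i I^u$ — unlike $gI^u$ on its own — extends across $D$; fixing the exponents and signs in $f_i$ that make this cancellation work is the only step that needs care. Everything else is the direct computation in the four affine charts announced in the introduction, with only finitely many cases.
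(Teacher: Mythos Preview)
Your proposal is correct and follows essentially the same approach as the paper's proof, which simply says that the argument is parallel to that of Lemma~\ref{Lemma:Phi_i} but with right multiplication by elements of $I^u$ rather than $I$. Your write-up is more carefully structured: you add the Hartogs-type reduction (using $\Pic(\Aff^2)=0$ and normality) to localize the problem at the generic points of the two boundary divisors in each chart, whereas the paper leaves this organizational step implicit; but the core computation---regularizing $gf_i$ across each boundary by a suitable $u\in I^u$---is the same in both.
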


\begin{proof}
For $i = 1$ the statement is clear. In the remaining cases the proof is similar to the proof of Lemma  \autoref{Lemma:Phi_i},
but now we instead multiply by suitable elements of $I^u$ from the right.
\end{proof}

The section $\sigma_i$ defines a trivialization $\tau_{i, T^{\circ}}: U_i \times T^{\circ} \xrightarrow{\cong} \widetilde{U_i}$ of
the $T^{\circ}$-torsor $\widetilde{\pi}\vert U_i$. 

\subsection{Local trivializations of $\sh{C} \vert \overline{O}$}
We first compute the transition functions w.r.t. local trivializations of $\sh{C}^{-1} \vert \overline{O}$, where $\sh{C}^{-1}$
denotes the line bundle inverse (dual) to $\mathcal{C}$. Let 
$\tau_{i, \Gm^{\central}}^{(-1)}: U_i \times \Aff^1 \xrightarrow{\cong} \sh{C}^{-1} \vert U_i$ be a trivialization of $\sh{C}^{-1} \vert U_i$. The corresponding transition
functions $t_{ij}^{(-1)}: U_i \cap U_j \rightarrow \Gm^{\central}$ are expressed through the $\tau_{i, \Gm^{\central}}^{(-1)}$ by
\begin{align*}
\tau_{i, \Gm^{\central}}^{(-1)-1} \circ \tau_{j, \Gm^{\central}}^{(-1)} : U_i \cap U_j \times \Aff^1 \xrightarrow{\cong}
U_i \cap U_j \times \Aff^1\;,\; (x,v) \mapsto \left(x, t_{ij}^{(-1)}(x)v\right)\;.
\end{align*}
Of course, the $t_{ij}^{(-1)}$ can be changed by a multiplicative constant. 

\begin{Lemma} \label{Lemma:transitionfunctionsforC-1}
We have
\begin{align*}
t_{12}^{(-1)} &= \frac{1}{x^3} \circ \Phi_1^{-1} \vert U_1 \cap U_2\\
t_{13}^{(-1)} &= \frac{1}{y} \circ \Phi_1^{-1} \vert U_1 \cap U_3\\
t_{14}^{(-1)} &= \frac{1}{xy} \circ \Phi_1^{-1} \vert U_1 \cap U_4 \;.
\end{align*}

\end{Lemma}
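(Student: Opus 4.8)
The plan is to read the transition functions off from the divisors of ratios of rational sections of $\sh{C}^{-1}$ on the surface $\overline{O}$, using that $\overline{O}$ is a toric surface on which $\sh{C}^{-1}$ and the coordinate functions $x,y$ of the first chart are torus-equivariant, so that the functions in question are monomials. Since $U_i \cong \Aff^2$, the sheaf $\sh{C}^{-1}\vert U_i$ is free of rank one; I choose $\tau_{i,\Gm^{\central}}^{(-1)}$ to be the trivialization attached to a nowhere-vanishing section $s_i \in \Gamma(U_i,\sh{C}^{-1})$, so that $t_{ij}^{(-1)} = s_j/s_i$ on $U_i \cap U_j$. Because $O \subseteq U_i \cap U_j$ is dense in the irreducible $\overline{O}$, it is enough to identify $t_{ij}^{(-1)}$ as a rational function on $\overline{O}$, and since it is anyway only determined up to a multiplicative constant, it suffices to compute its divisor.

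First I would set up the toric geometry. With $N = \Z^2$ the cocharacter lattice of $O \cong \Gm^2$ written in the coordinates $(a_{-1},a_0)$ of the first chart, the coordinate transformations of \autoref{Lemma:Phi_i} present $\overline{O} = \overline{X_{s_1s_0}}$ as the toric variety (the Hirzebruch surface $\mathbb{F}_2$) whose fan has rays spanned by $u_1 = (1,0)$, $u_2 = (0,1)$, $u_3 = (0,-1)$, $u_4 = (-1,-2)$, with maximal cones $\langle u_1,u_2\rangle,\langle u_2,u_4\rangle,\langle u_1,u_3\rangle,\langle u_3,u_4\rangle$ corresponding to $U_1,U_2,U_3,U_4$; equivalently $U_i$ is the complement in $\overline{O}$ of the two invariant prime divisors $D_k$ with $u_k \notin \sigma_i$, where $D_k$ is attached to $u_k$. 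Computing divisors of torus characters, the chart-$1$ coordinates $x = a_{-1}$ and $y = a_0$ satisfy $\operatorname{div}(x) = D_1 - D_4$ and $\operatorname{div}(y) = D_2 - D_3 - 2D_4$, and the intersection numbers are $D_1^2 = D_4^2 = 0$, $D_2^2 = 2$, $D_3^2 = -2$, $D_3\cdot D_4 = 1$.

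Next I would identify the class of $\sh{C}$ in $\Pic(\overline{O})$. Since $U_1 = X_{s_1s_0}$, the boundary $\overline{O}\setminus U_1$ is the union of the smaller Schubert curves $\overline{X_{s_1}} = P_1/I$ and $\overline{X_{s_0}} = P_0/I$, hence equals $D_3 \cup D_4$; as $\overline{X_{s_0}}$ is a fibre of the $\Proj^1$-bundle $\overline{X_{s_1s_0}} \to P_1/I$ (the Demazure resolution being an isomorphism) it has self-intersection $0$ and must therefore be $D_4$, so $\overline{X_{s_1}} = D_3$. The normalization $\sh{C}\vert(P_i/I) \cong \sh{O}_{\Proj^1}(1)$ then says $\sh{C}\cdot D_3 = \sh{C}\cdot D_4 = 1$; writing $\sh{C} \sim \alpha D_3 + \beta D_4$ in the basis $\{D_3,D_4\}$ of $\Pic(\overline{O})$ and using the intersection numbers above, this reads $-2\alpha + \beta = 1$ and $\alpha = 1$, so $\sh{C} \sim D_3 + 3D_4$ and $\sh{C}^{-1} \sim -D_3 - 3D_4$. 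For each $i$, the section $s_i$ is then, up to a nonzero scalar, the unique rational section of $\sh{C}^{-1}$ whose divisor is supported on $\overline{O}\setminus U_i$ and linearly equivalent to $-D_3 - 3D_4$; solving this constraint gives $\operatorname{div}(s_1) = -D_3 - 3D_4$, $\operatorname{div}(s_2) = -3D_1 - D_3$, $\operatorname{div}(s_3) = -D_2 - D_4$, $\operatorname{div}(s_4) = -D_1 - D_2$. Hence $\operatorname{div}(t_{1j}^{(-1)}) = \operatorname{div}(s_j) - \operatorname{div}(s_1)$ is $-3(D_1-D_4) = -3\operatorname{div}(x)$ for $j=2$, is $-(D_2-D_3-2D_4) = -\operatorname{div}(y)$ for $j=3$, and is $-\operatorname{div}(x)-\operatorname{div}(y) = -\operatorname{div}(xy)$ for $j=4$, giving, up to the permitted constant, $t_{12}^{(-1)} = x^{-3}$, $t_{13}^{(-1)} = y^{-1}$, $t_{14}^{(-1)} = (xy)^{-1}$; since $(x,y) = \Phi_1^{-1}$ stands for the pair $(a_{-1},a_0)$, this is exactly the assertion.

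The \emph{main obstacle} is the third step, identifying $\sh{C}$ in $\Pic(\overline{O})$: one must match the Bruhat stratification of $\overline{X_{s_1s_0}}$ with the toric stratification of the surface (deciding which two of the four boundary curves are $P_1/I$ and $P_0/I$) and then translate the normalization $\sh{C}\vert(P_i/I)\cong\sh{O}_{\Proj^1}(1)$ into a divisor class; the exponent $3$ in $t_{12}^{(-1)}$ originates here, as $3 = 1 - \overline{X_{s_1}}^2$ with $\overline{X_{s_1}}^2 = -2$ the off-diagonal affine Cartan integer. The remaining steps are routine bookkeeping with monomials on a toric surface. An alternative that avoids toric language is to realize $\sh{C}^{-1}$ as the pullback of $\sh{O}(-1)\boxtimes\sh{O}(-1)$ under a closed embedding of $X$ into the product of the projectivizations of the two level-one integrable $\affLasl_2$-modules, to lift the universal point of $O$ together with the diagonal factors $f_i$ of \autoref{Lemma:trivializationstauiTcirc} to explicit vectors, and to read off the transition scalars from a weight computation; this yields the same exponents $3,1,1$ at the cost of heavier representation-theoretic bookkeeping.
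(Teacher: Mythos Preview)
Your argument is correct, and it takes a genuinely different route from the paper's proof. The paper bypasses the identification of $\sh{C}$ in $\Pic(\overline{O})$ entirely: it invokes the isomorphism $\sh{C}^{-1}\vert\overline{X_{s_1s_0}}\cong\Omega_{\overline{X_{s_1s_0}}}(\overline{X_{s_1}}+\overline{X_{s_0}})$ (the log dualizing sheaf along the Schubert boundary), so that on each $U_i$ a nowhere-vanishing section $\omega_i$ is simply an explicit $2$-form in the coordinates $(x,y)$ of Lemma~\ref{Lemma:Phi_i}, with the prescribed poles along $\Phi_i^{-1}(\overline{X_{s_1}}\cup\overline{X_{s_0}})$; the ratios $\omega_j/\omega_i$ are then read off by elementary calculus. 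Your approach instead treats $\overline{O}$ as the Hirzebruch surface $\mathbb{F}_2$, pins down $\sh{C}\sim D_3+3D_4$ by intersecting with the two Schubert curves, and recovers the transition functions from divisor arithmetic.

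What each buys: the paper's method is shorter and avoids any intersection-theory bookkeeping, but it imports the identification $\sh{C}^{-1}\cong\Omega(\overline{X_{s_1}}+\overline{X_{s_0}})$ from the literature as a black box. Your method is self-contained (it only uses the normalization $\sh{C}\vert(P_i/I)\cong\sh{O}_{\Proj^1}(1)$ already stated in the paper) and makes transparent why the exponent $3$ appears, via $\beta = 1 - D_3^2 = 1-(-2)$; it also re-derives the log-dualizing description as a by-product, since $-D_1-D_2 \sim -D_3-3D_4$ is exactly $K_{\overline{O}}+D_3+D_4$ in your basis. The step you flag as the main obstacle---matching $D_3,D_4$ with $\overline{X_{s_1}},\overline{X_{s_0}}$---is handled correctly (your self-intersection argument that the fibre class $\overline{X_{s_0}}$ must be $D_4$ is sound, and one can also confirm it directly from the limits recorded in the proof of Lemma~\ref{Lemma:Phi_i}).
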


\begin{proof}
We have an isomorphism of line bundles $\sh{C}^{-1} \vert \overline{X_{s_1s_0}} \cong \Omega_{\overline{X_{s_1s_0}}}(\overline{X_{s_1}}+\overline{X_{s_0}})$, see e.g. \cite{Zhu10},
where the right-hand side are the top-degree forms on $\overline{X_{s_1s_0}}$ with possible simple poles along the Schubert divisors.
This description is the reason why we consider $\mathcal{C}^{-1}$ at all. Let $\omega_i \in \Gamma(U_i, \Omega_{\overline{X_{s_1s_0}}}(\overline{X_{s_1}}+\overline{X_{s_0}}))$ 
be a nowhere vanishing section (unique up to nonzero constant). We have $t_{ij}^{(-1)} = \frac{\omega_j}{\omega_i} \in \Gamma(U_i \cap U_j, \sh{O})^{\times}$. 
We have a canonical isomorphism
\begin{align*}
\Gamma(U_i, \Omega_{\overline{X_{s_1s_0}}}(\overline{X_{s_1}}+\overline{X_{s_0}})) = \begin{cases} \Gamma(U_1, \Omega_{U_1}) 
& i = 1 \\
\Gamma(U_2, \Omega_{U_2}(\Phi_2(\{0\}\times \Aff^1))) & i=2 \\
\Gamma(U_3, \Omega_{U_3}(\Phi_3(\Aff^1 \times \{0\}))) & i=3 \\
\Gamma(U_4, \Omega_{U_4}(\Phi_4(\Aff^1 \times \{0\} \cup \{0\} \times \Aff^1))) & i=4\;.
\end{cases}
\end{align*}
It is now easy to write down explicit expressions for the $\omega_i$, from which we then find the expressions for the $t_{ij}^{(-1)}$ given in the lemma. 
\end{proof}

The trivialization $\tau_{i, \Gm^{\central}}^{(-1)}$ induces a trivialization $\tau_{i, \Gm^{\central}}: U_i \times \Aff^1 \xrightarrow{\cong} \sh{C} \vert U_i  $ of $\sh{C} \vert U_i$ and the corresponding transition functions are $t_{ij} = \frac{1}{t_{ij}^{(-1)}}$. 

\subsection{Local coordinates on $\widetilde{\sh{C}}^{\times} \vert \widetilde{\overline{O}}$}
Combining the local coordinates and trivializations of the previous sections we now introduce local coordinates on the total
space of $\widetilde{\sh{C}}^{\times} \vert \widetilde{\overline{O}}$.
We define
\begin{align*}
\Phi_{i,T^{\circ},\Gm^{\central}} = \widetilde{\pi}^* \tau_{i, \Gm^{\central}} \circ (\tau_{i, T^{\circ}} \times \id) \circ (\Phi_i \times \id):
\Aff^2 \times T^{\circ} \times \Gm^{\central} \xrightarrow{\cong} \widetilde{C}^{\times} \vert \widetilde{U_i}
\end{align*}
 for $i=1,2,3,4$. 
This composition is $T^{\circ} \times \Gm^{\central}$-equivariant for the obvious $T^{\circ} \times \Gm^{\central}$-action on $\Aff^2 \times T^{\circ} \times \Gm^{\central}$ as all three composition factors are so.  
Using this we define
\begin{align*}
& \Psi_{ij, T^{\circ}, \Gm^{\central}} = \Phi_{i,T^{\circ}, \Gm^{\central}}^{-1} \circ \Phi_{j,T^{\circ}, \Gm^{\central}}: \\
& \Phi_j^{-1}(U_i \cap U_j) \times T^{\circ} \times \Gm^{\central} \xrightarrow{\cong}  \Phi_i^{-1}(U_i \cap U_j) \times T^{\circ} \times \Gm^{\central}\;.
\end{align*}

\begin{Cor}\label{Cor:Psiij} The isomorphism
\begin{align*}
\Psi_{12, T^{\circ}, \Gm^{\central}}: & \Gm \times \Aff^1 \times T^{\circ} \times \Gm^{\central} \xrightarrow{\cong} \Gm \times \Aff^1 \times T^{\circ} \times \Gm^{\central}\\
\Psi_{13, T^{\circ}, \Gm^{\central}}:& \Aff^1 \times \Gm \times T^{\circ} \times \Gm^{\central} \xrightarrow{\cong} \Aff^1 \times \Gm \times T^{\circ} \times \Gm^{\central} \\
\Psi_{14, T^{\circ}, \Gm^{\central}}:& \Gm^2 \times T^{\circ} \times \Gm^{\central} \xrightarrow{\cong} \Gm^2 \times T^{\circ} \times \Gm^{\central}
\end{align*}

is given by 

\begin{align*}
\left(x,y, \begin{pmatrix} a & 0 \\ 0 & a^{-1}\end{pmatrix}, v\right) &\mapsto \left(\frac{1}{x}, \frac{y}{x^2}, \begin{pmatrix} \frac{a}{x} & 0 \\ 0 & \frac{x}{a}\end{pmatrix}, \frac{1}{x^{3}} v\right)\\
& \mapsto \left(x, \frac{1}{y}, \begin{pmatrix} \frac{a}{y}  & 0 \\ 0 & \frac{y}{a}\end{pmatrix}, \frac{1}{y} v\right) \\
& \mapsto \left(\frac{1}{x},  \frac{1}{x^2 y}, \begin{pmatrix} \frac{a}{xy} & 0 \\
0 & \frac{xy}{a}\end{pmatrix}, \frac{1}{x^3 y} v\right)
\end{align*}

respectively. 

\end{Cor}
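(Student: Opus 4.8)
The plan is to assemble $\Psi_{1j,T^\circ,\Gm^\central}$ out of its three constituent pieces, exactly as dictated by the definition $\Psi_{ij,T^\circ,\Gm^\central}=\Phi_{i,T^\circ,\Gm^\central}^{-1}\circ\Phi_{j,T^\circ,\Gm^\central}$ together with $\Phi_{i,T^\circ,\Gm^\central}=\widetilde\pi^*\tau_{i,\Gm^\central}\circ(\tau_{i,T^\circ}\times\id)\circ(\Phi_i\times\id)$. Since the three factors commute appropriately with one another (the middle and outer factors only touch the $T^\circ$- and $\Gm^\central$-coordinates, while the inner factor only touches $\Aff^2$), the composite $\Psi_{1j}$ decomposes coordinate-block by coordinate-block: the $\Aff^2$-part is governed by $\Phi_1^{-1}\circ\Phi_j$, the $T^\circ$-part by the ratio of the trivializing sections $\sigma_1,\sigma_j$, and the $\Gm^\central$-part by the transition function $t_{1j}$.

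First I would read off the $\Aff^2$-component. By Lemma \autoref{Lemma:Phi_i}, $\Phi_1^{-1}\circ\Phi_j$ on the relevant open locus is precisely the birational map listed in that lemma going from the $j$-th chart back to the first, so for $j=2,3,4$ we get $(x,y)\mapsto(1/x,y/x^2)$, $(x,y)\mapsto(x,1/y)$, $(x,y)\mapsto(1/x,1/(x^2y))$ respectively — here one just inverts the change-of-coordinates formulae in the lemma, noting e.g.\ that $a_{-1}=1/x$ and $a_0=y/x^2$ recover $(a_{-1},a_0)$ from the second chart's coordinates $(1/a_{-1},a_0/a_{-1}^2)$. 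Second, the $T^\circ$-component: by Lemma \autoref{Lemma:trivializationstauiTcirc}, the two sections $\sigma_1,\sigma_j$ of $\widetilde\pi$ over $U_1\cap U_j$ differ by right multiplication by $f_j\in T^\circ$, so the trivialization change $\tau_{1,T^\circ}^{-1}\circ\tau_{j,T^\circ}$ sends $\bigl(\begin{smallmatrix}a&0\\0&a^{-1}\end{smallmatrix}\bigr)$ to $f_j\bigl(\begin{smallmatrix}a&0\\0&a^{-1}\end{smallmatrix}\bigr)$; substituting $f_2=\diag(a_{-1},a_{-1}^{-1})=\diag(1/x,x)$, $f_3=\diag(a_0,a_0^{-1})=\diag(1/y,y)$, $f_4=\diag(a_0/a_{-1},a_{-1}/a_0)=\diag(1/(xy),xy)$ (all re-expressed in the target coordinates $(x,y)$ of chart $1$, equivalently in the source coordinates via $\Phi_1^{-1}\circ\Phi_j$) yields the diagonal matrices displayed in the corollary. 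Third, the $\Gm^\central$-component is multiplication by the transition function $t_{1j}$ of $\sh{C}$, and since $t_{1j}=1/t_{1j}^{(-1)}$ with $t_{1j}^{(-1)}$ as in Lemma \autoref{Lemma:transitionfunctionsforC-1}, we get $t_{12}=x^3$, $t_{13}=y$, $t_{14}=xy$ — wait, one must be careful about whether the scalar acts as $t_{1j}$ or $t_{1j}^{-1}$ on the fibre coordinate $v$: tracing through the convention by which $\tau_{i,\Gm^\central}^{-1}\circ\tau_{j,\Gm^\central}$ acts as $(x,v)\mapsto(x,t_{ij}v)$, and remembering the formula in the corollary lives on $\sh{C}^\times$ (not $\sh{C}^{-1}$), one finds the fibre coordinate transforms by $v\mapsto t_{1j}^{(-1)}\cdot v=\tfrac{1}{x^3}v,\ \tfrac1y v,\ \tfrac1{xy}v$ after re-expressing in chart-$1$ coordinates, matching the stated formulae.

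Finally I would just restrict to the asserted domains: $\Phi_j^{-1}(U_1\cap U_j)$ is $\Gm\times\Aff^1$ for $j=2$, $\Aff^1\times\Gm$ for $j=3$, $\Gm^2$ for $j=4$ (these follow from Lemma \autoref{Lemma:Phi_i} by identifying which coordinate hyperplanes get blown down/away), adjoin the untouched $T^\circ\times\Gm^\central$ factors, and record the composite. The only genuinely delicate point — the ``main obstacle'' such as it is — is bookkeeping of the inversion conventions: keeping straight that $\Psi_{1j}=\Phi_1^{-1}\circ\Phi_j$ (so chart $j$ is the source and chart $1$ the target), that the $T^\circ$-shift is by $f_j$ and not $f_j^{-1}$, and that the central scalar transforms by $t_{1j}^{(-1)}$ and not its reciprocal because the final coordinates are on $\widetilde{\sh{C}}^\times$ whereas the transition functions were computed for $\sh{C}^{-1}$. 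Once those sign/inversion choices are fixed consistently with the definitions in \S4, everything else is a direct substitution and the three formulae of the corollary drop out.
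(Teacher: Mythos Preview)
Your approach is exactly the paper's: a direct computation from Lemmas \ref{Lemma:Phi_i}, \ref{Lemma:trivializationstauiTcirc}, and \ref{Lemma:transitionfunctionsforC-1}, handling the $\Aff^2$-, $T^\circ$-, and $\Gm^{\central}$-blocks separately. The $\Aff^2$- and $T^\circ$-parts are correct.

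There is, however, a genuine slip in the $\Gm^{\central}$-block. Your first instinct was right: by the convention stated just before Lemma \ref{Lemma:transitionfunctionsforC-1} and the sentence $t_{ij}=1/t_{ij}^{(-1)}$ after it, the fibre coordinate on $\sh{C}^{\times}$ transforms as $v\mapsto t_{1j}\,v$, \emph{not} $v\mapsto t_{1j}^{(-1)}\,v$. The reason your values $x^3,\,y,\,xy$ did not match the corollary is a coordinate mix-up: the $(x,y)$ in the corollary are the \emph{source} (chart-$j$) coordinates, whereas Lemma \ref{Lemma:transitionfunctionsforC-1} records $t_{1j}^{(-1)}$ in chart-$1$ coordinates. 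Writing $t_{1j}=a_{-1}^3,\ a_0,\ a_{-1}a_0$ and substituting $a_{-1}=1/x,\ a_0=y/x^2$ (for $j=2$), $a_0=1/y$ (for $j=3$), $a_{-1}=1/x,\ a_0=1/(x^2y)$ (for $j=4$) gives $t_{1j}=1/x^3,\ 1/y,\ 1/(x^3y)$ in source coordinates. Your two compensating errors (wrong sign of the exponent, wrong chart) happen to cancel for $j=2,3$, but for $j=4$ they do not: the correct factor is $1/(x^3y)$, not the $1/(xy)$ you wrote, and only the former matches the statement.
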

\begin{proof}
This is a direct computation using only Lemma \autoref{Lemma:Phi_i}, Lemma \autoref{Lemma:trivializationstauiTcirc},
and Lemma \autoref{Lemma:transitionfunctionsforC-1}. 
\end{proof}

\section{Local system $\sh{L}_{\Lambda, \kappa, \mu_{-1}, \mu_0}$}
The isomorphism $\Phi_{1,T^{\circ}, \Gm^{\central}}$ restricts to an isomorphism $\Gm^2 \times T^{\circ} \times \Gm^{\central} \xrightarrow{\cong} 
\widetilde{\sh{C}}^{\times} \vert \widetilde{O}$ denoted by the same symbol.
For $\Lambda, \kappa, \mu_{-1}, \mu_0 \in \C$ we define the local system $\sh{L}_{\Lambda,\kappa,\mu_{-1},\mu_0}$
of rank one in the category of right $\sh{D}$-modules on $\widetilde{\sh{C}}^{\times} \vert \widetilde{O}$ by
\begin{align*}
\Phi_{1, T^{\circ}, \Gm^{\central}}^* \sh{L}_{\Lambda, \kappa, \mu_{-1}, \mu_0} = 
\Omega_{\Lambda, \kappa, \mu_{-1}, \mu_0} = \Omega^{(\mu_{-1})}_{\Gm} \boxtimes \Omega^{(\mu_0)}_{\Gm}
\boxtimes \Omega^{(\Lambda)}_{T^{\circ}} \boxtimes \Omega^{(\kappa)}_{\Gm^{\central}}\;.
\end{align*}
Here we employed the rank-one local system $\Omega_A^{(\lambda)}$ in the category of right $\sh{D}$-modules on $A$
defined by $\Omega_A^{(\lambda)} = \sh{D}_A/(\xi_v+\lambda(v), v \in \mathfrak{a})\sh{D}_A$, where $A$ is any algebraic torus, $\mathfrak{a}$ is
its Lie algebra, $\xi_v$ is the translation vector field on $A$ given by $v \in \mathfrak{a}$, and $\lambda: \mathfrak{a} \rightarrow \C$ is 
a linear map. $\sh{D}_A$ is the sheaf of differential operators on $A$ and $(\xi_v+\lambda(v), v \in \mathfrak{a})\sh{D}_A$
is the right ideal of it generated by the indicated relations. In the case $A = \Gm$ we identify $\lambda$ canonically with a complex number. In the case of $\Omega^{(\Lambda)}_{T^{\circ}}$ we moreover 
use the isomorphism $\Gm \xrightarrow{\cong} T^{\circ}$ given by $a \mapsto \begin{pmatrix} a & 0 \\ 0 & a^{-1}\end{pmatrix}$ in order to be able to consider
$\Lambda$ as a complex number.  
Our choice of the parameter name $\mu_{-1}$ and $\mu_0$ 
is due to the fact that the corresponding coordinates in $O$, given by $\Phi_1$, are the coefficients of $t^{-1}$ and $t^0$. 
Of course, the local system $\sh{L}_{\Lambda,\kappa, \mu_{-1},\mu_0}$ can equivalently be viewed as a 
sheaf of modules on $O$ for the $(\Lambda,\kappa)$-twisted
differential operators on $O$, but in this article we only use this for terminological convenience in the title and some other places and not 
in computations. It is thus natural to refer to $\Lambda$ and $\kappa$ as the twist
parameters.

\section{Cleanness of the $\sh{D}$-module extension}
We recall the notion of cleanness of a $\sh{D}$-module extension \cite{Ber}. 
Let $\sh{M}$ be a holonomic right $\sh{D}$-module on a smooth variety $X$ and $\iota: X \hookrightarrow Y$ a locally closed affine embedding
into a smooth variety $Y$. There is a canonical morphism $\can_{\iota}: \iota_! \sh{M} \rightarrow \iota_* \sh{M}$ of holonomic right $\sh{D}$-modules on $Y$ that is the identity when restricted to $X$. The $\sh{D}$-module extension of $\sh{M}$ w.r.t. $\iota$ is called clean if $\can_{\iota}$ is an isomorphism. The following remark can be deduced from the construction of $\can_{\iota}$. 

\begin{Rem} \label{Rem:cleannnessislocal}
In case $\iota$ is an open affine embedding and $\{U_i\}_i$ is an open cover of $Y$ we have $\can_{\iota} \vert U_i = \can_{\iota_i}$ for each $i$,
where $\iota_i: \iota^{-1}(U_i) \hookrightarrow U_i$ is the restriction of $\iota$. Thus $\can_{\iota}$ is an isomorphism if and only if $\can_{\iota_i}$ is an isomorphism for each $i$. This means that the $\sh{D}$-module extension of $\sh{M}$ w.r.t. $\iota$ is clean if and only if the $\sh{D}$-module extension of 
the restriction $\sh{M} \vert \iota_i^{-1}(U_i)$ w.r.t. $\iota_i$ is clean for each $i$. 
\end{Rem}

\begin{Lemma} \label{Lemma:cleannessforOmegamu1boxtimesOmegamu2} Let $\mu_1, \mu_2 \in \C$.
The $\sh{D}$-module extension of $\Omega^{(\mu_1)}_{\Gm} \boxtimes \Omega^{(\mu_2)}_{\Gm}$
w.r.t. $\inc: \Gm^2 \hookrightarrow \Aff^2$ is clean if and only if $\mu_1 \notin \Z$ and $\mu_2 \notin \Z$. 
\end{Lemma}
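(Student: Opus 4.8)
The plan is to reduce the two-dimensional cleanness question to the one-dimensional one by a Künneth-type argument and an explicit computation on $\Gm \hookrightarrow \Aff^1$. First I would recall that for a product situation $\inc = \inc_1 \times \inc_2$ with $\inc_k: \Gm \hookrightarrow \Aff^1$, the functors $!$-pushforward and $*$-pushforward commute with external tensor product, so $\inc_! (\Omega^{(\mu_1)}_{\Gm} \boxtimes \Omega^{(\mu_2)}_{\Gm}) = (\inc_{1!}\Omega^{(\mu_1)}_{\Gm}) \boxtimes (\inc_{2!}\Omega^{(\mu_2)}_{\Gm})$ and likewise for $*$; moreover the canonical morphism $\can_{\inc}$ is the external tensor product $\can_{\inc_1} \boxtimes \can_{\inc_2}$. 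Then I would invoke that an external tensor product of morphisms of (holonomic) $\sh{D}$-modules is an isomorphism if and only if each factor is, which follows from faithful flatness/exactness of $\boxtimes$ on the relevant module categories together with the fact that $\boxtimes$ of nonzero holonomic modules is nonzero. This shreds the problem into: $\can_{\inc_k}$ is an isomorphism iff $\mu_k \notin \Z$.

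Next I would carry out the single-variable computation. On $\Aff^1$ with coordinate $z$, write $\sh{D}_{\Aff^1} = \C[z]\langle \partial_z\rangle$. The local system $\Omega^{(\mu)}_{\Gm}$ is $\sh{D}_{\Gm}/(z\partial_z + \mu)\sh{D}_{\Gm}$; concretely it is the right $\sh{D}$-module "$z^{\mu}$" — one may take the free rank-one $\C[z,z^{-1}]$-module with generator $e$ and right action $e \cdot \partial_z = -\mu z^{-1} e$ (here I am using the right-module conventions of the paper, where $\xi_v + \lambda(v)$ annihilates the generator). I would then compute $\inc_! \Omega^{(\mu)}_{\Gm}$ and $\inc_* \Omega^{(\mu)}_{\Gm}$ explicitly as $\sh{D}_{\Aff^1}$-modules: the $*$-extension is $\C[z,z^{-1}] \otimes_{\C[z]} (\text{the }\sh{D}\text{-module generated by }z^{\mu})$, realized as $z^{\mu}\C[z,z^{-1}]$ with the same twisted derivation, while the $!$-extension is its Verdier/holonomic dual twisted appropriately, equivalently $z^{\mu}\C[z,z^{-1}]$ localized on the other side. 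The upshot is the standard one: $\inc_! \Omega^{(\mu)}_{\Gm} \to \inc_* \Omega^{(\mu)}_{\Gm}$ fails to be an isomorphism exactly when the "monodromy is trivial", i.e. when $\mu \in \Z$, in which case $\inc_* \Omega^{(\mu)}_{\Gm}$ acquires the extra composition factor $\delta_0$ (the skyscraper $\sh{D}$-module at $0$) that is absent from $\inc_! \Omega^{(\mu)}_{\Gm}$, and conversely $\inc_!$ acquires it at the "other end"; when $\mu \notin \Z$ both extensions coincide with the intermediate extension $\inc_{!*}$ and $\can$ is an isomorphism. I would make this precise by exhibiting the cokernel of $\can_{\inc}$ as $\delta_0$ when $\mu \in \Z$ (e.g. by noting $z^{\mu}\C[z]$ is a $\sh{D}_{\Aff^1}$-submodule of $z^{\mu}\C[z,z^{-1}]$ precisely when $\mu \in \Z_{\geq 0}$, and handling general integers by the relation between $!$ and $*$ under duality and the shift $\mu \mapsto -\mu$).

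For the converse direction (cleanness fails $\Rightarrow$ some $\mu_k \in \Z$) I would argue by the contrapositive just established in the product decomposition: if both $\mu_1, \mu_2 \notin \Z$ then each $\can_{\inc_k}$ is an isomorphism, hence so is their external tensor product, hence so is $\can_{\inc}$; thus if $\can_{\inc}$ is not an isomorphism then at least one $\mu_k$ lies in $\Z$. Combined with the forward direction this is the stated equivalence. I expect the main obstacle to be setting up the Künneth statement for $!$- and $*$-extensions and the "$\boxtimes$ detects isomorphisms" lemma with enough care in the holonomic right $\sh{D}$-module formalism — in particular making sure the canonical morphism $\can$ genuinely factors as an external product, which is where one uses that the embedding is a product of affine open embeddings and that base change holds. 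The explicit $\Aff^1$ computation is entirely routine once the conventions are fixed, so the conceptual reduction is the real content; I would likely phrase it so that the $\Aff^1$ case is quoted from or reduced to the classical theory of the $\sh{D}$-module $z^{\mu}$ on the affine line with a puncture, and then spend the bulk of the written proof on the Künneth reduction.
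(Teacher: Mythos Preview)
Your proposal is correct but follows a genuinely different route from the paper. You reduce to the one-variable case via a K\"unneth argument: $\inc = \inc_1 \times \inc_2$, both $\inc_!$ and $\inc_*$ commute with $\boxtimes$, the canonical morphism factors as $\can_{\inc_1}\boxtimes\can_{\inc_2}$, and a $\boxtimes$ of maps between nonzero holonomic modules is an isomorphism iff each factor is; then you quote the classical fact that $\can$ for $\Omega^{(\mu)}_{\Gm}$ along $\Gm\hookrightarrow\Aff^1$ is an isomorphism iff $\mu\notin\Z$. The paper instead works directly in two variables: it shows that cleanness for the parameters $(\epsilon\mu_1,\epsilon\mu_2)$, $\epsilon\in\{\pm1\}$, is equivalent to simplicity of the naive pushforward $\inc_\cdot(\Omega^{(\epsilon\mu_1)}_{\Gm}\boxtimes\Omega^{(\epsilon\mu_2)}_{\Gm})$ for both signs, using that the image of $\can_{\inc}$ is always simple together with the identification of $\Dual\can_{\inc}$ with $\can_{\inc}$ for the sign-reversed parameters; it then checks simplicity of the two-variable pushforward by hand from the explicit action of $x,y,\partial_x,\partial_y$ on the natural basis. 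Your approach is more modular and generalizes immediately to $\Gm^n\hookrightarrow\Aff^n$, at the cost of importing the K\"unneth compatibilities for $!$, $*$, and $\can$; the paper's approach is more self-contained and yields as a byproduct the equivalence between cleanness and simplicity that is reused later in Remark~\ref{Rem:cleannessequivalenttosimplicity}. Your one-variable sketch is a bit loose (the description of $\inc_!$ as ``localized on the other side'' should be replaced by the duality identity $\inc_!\Omega^{(\mu)}_{\Gm}\cong\Dual\,\inc_*\Omega^{(-\mu)}_{\Gm}$ or by an explicit presentation), but this is routine to make precise.
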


\begin{proof}
We first argue that the extension of $\Omega^{(\epsilon \mu_1)}_{\Gm} \boxtimes \Omega^{(\epsilon \mu_2)}_{\Gm}$ w.r.t. $\inc$ is clean
for $\epsilon \in \{\pm 1\}$ if and only if $\inc_{\cdot}(\Omega^{(\epsilon \mu_1)}_{\Gm} \boxtimes \Omega^{(\epsilon \mu_2)}_{\Gm})$ is
a simple $\sh{D}$-module on $\Aff^2$ for $\epsilon \in \{\pm 1\}$. We know \cite{Ber} that the image of $\can_{\inc}: \inc_!(\Omega^{(\mu_1)}_{\Gm} \boxtimes \Omega^{(\mu_2)}_{\Gm})
\rightarrow \inc_{\cdot}(\Omega^{(\mu_1)}_{\Gm} \boxtimes \Omega^{(\mu_2)}_{\Gm})$
is a simple $\sh{D}$-module on $\Aff^2$ and thus obtain one implication. For the other implication we use that if $\inc_{\cdot}(\Omega^{(\mu_1)}_{\Gm} \boxtimes \Omega^{(\mu_2)}_{\Gm})$ is simple, then $\can_{\inc}$ surjects and hence
$\Dual \can_{\inc}$, which can be identified with $\can_{\inc}: \inc_!(\Omega^{(-\mu_1)}_{\Gm} \boxtimes \Omega^{(-\mu_2)}_{\Gm})
\rightarrow \inc_{\cdot}(\Omega^{(-\mu_1)}_{\Gm} \boxtimes \Omega^{(-\mu_2)}_{\Gm})$, injects. Here $\Dual$ denotes the 
holonomic duality functor on $\Aff^2$.
Finally, it is easy to see that $\inc_{\cdot}(\Omega^{(\mu_1)}_{\Gm} \boxtimes \Omega^{(\mu_2)}_{\Gm})$ is a simple $\sh{D}$-module on $\Aff^2$
if and only if $\mu_1 \notin \Z$ and $\mu_2 \notin \Z$. Indeed, this follows from elementary arguments using the explicit action of $x,y,\partial_x, \partial_y$ on the elements of the natural $\C$-basis of $\Gamma(\Gm, \Omega_{\Gm}^{(\mu_1)}) \otimes_{\C} \Gamma(\Gm, \Omega^{(\mu_2)}_{\Gm})$.
\end{proof}

The following theorem is the main result of this work. 
\begin{Thm}\label{Thm:cleanness}
Let $\Lambda, \kappa, \mu_{-1}, \mu_0 \in \C$. 
The $\sh{D}$-module extension of $\sh{L}_{\Lambda, \kappa, \mu_{-1}, \mu_0}$ w.r.t.  the inclusion $\inc: \widetilde{\sh{C}}^{\times} \vert \widetilde{O}
\hookrightarrow \widetilde{\sh{C}}^{\times}$ is clean if and only if $\mu_{-1} \notin \Z$
and $\mu_0 \notin \Z$ and $\mu_{-1}+2\mu_0+\Lambda + 3\kappa \notin \Z$
and $\mu_0+\Lambda+\kappa \notin \Z$. \end{Thm}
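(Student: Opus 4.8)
The plan is to reduce the cleanness question on $\widetilde{\sh{C}}^{\times}$ to the already-solved model computation in Lemma \autoref{Lemma:cleannessforOmegamu1boxtimesOmegamu2}. By Remark \autoref{Rem:cleannnessislocal}, cleanness of the extension along $\inc: \widetilde{\sh{C}}^{\times} \vert \widetilde{O} \hookrightarrow \widetilde{\sh{C}}^{\times}$ is equivalent to cleanness of the extension of the restriction to each chart $\widetilde{\sh{C}}^{\times} \vert \widetilde{U_i}$ of the open cover $\{U_i\}_{i=1,2,3,4}$. In the chart $U_1 = X_{s_1 s_0}$ we have $\widetilde{\sh{C}}^{\times}\vert \widetilde{O} = \widetilde{\sh{C}}^{\times}\vert \widetilde{U_1}$, so there is nothing to check. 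Using the coordinate isomorphism $\Phi_{i, T^{\circ}, \Gm^{\central}}: \Aff^2 \times T^{\circ} \times \Gm^{\central} \xrightarrow{\cong} \widetilde{\sh{C}}^{\times}\vert \widetilde{U_i}$, the inclusion $\widetilde{\sh{C}}^{\times}\vert \widetilde{O} \cap \widetilde{U_i} \hookrightarrow \widetilde{\sh{C}}^{\times}\vert \widetilde{U_i}$ becomes, in coordinates, an open inclusion $V_i \times T^{\circ} \times \Gm^{\central} \hookrightarrow \Aff^2 \times T^{\circ} \times \Gm^{\central}$ where $V_i = \Phi_i^{-1}(U_1 \cap U_i) \subseteq \Aff^2$ is the locus read off from Corollary \autoref{Cor:Psiij}: $V_2 = \Gm \times \Aff^1$, $V_3 = \Aff^1 \times \Gm$, $V_4 = \Gm^2$.

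The key step is to compute $\Phi_{i, T^{\circ}, \Gm^{\central}}^* (\sh{L}_{\Lambda, \kappa, \mu_{-1}, \mu_0}\vert (\widetilde{\sh{C}}^{\times}\vert \widetilde{O} \cap \widetilde{U_i}))$ for $i = 2,3,4$. Since $\sh{L}_{\Lambda,\kappa,\mu_{-1},\mu_0}$ is defined via $\Phi_{1,T^{\circ},\Gm^{\central}}^* \sh{L}_{\Lambda,\kappa,\mu_{-1},\mu_0} = \Omega^{(\mu_{-1})}_{\Gm} \boxtimes \Omega^{(\mu_0)}_{\Gm} \boxtimes \Omega^{(\Lambda)}_{T^{\circ}} \boxtimes \Omega^{(\kappa)}_{\Gm^{\central}}$, the pullback to the $i$-th chart is obtained by applying the transition isomorphism $\Psi_{1i, T^{\circ}, \Gm^{\central}}$ of Corollary \autoref{Cor:Psiij}. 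A rank-one local system $\Omega_A^{(\lambda)}$ on a torus is classified by its monodromy character, i.e. by $\lambda \bmod \Z$-type data, and under a monomial automorphism of a product of tori it transforms by the corresponding linear substitution in the exponents. Reading off the monomial maps from Corollary \autoref{Cor:Psiij} — in chart $2$, $(x,y,a,v) \mapsto (1/x,\, y/x^2,\, a/x,\, v/x^3)$; in chart $3$, $(x,y,a,v)\mapsto (x,\, 1/y,\, a/y,\, v/y)$; in chart $4$, $(x,y,a,v)\mapsto (1/x,\, 1/(x^2 y),\, a/(xy),\, v/(x^3 y))$ — one computes that $\Phi_{i,T^{\circ},\Gm^{\central}}^*\sh{L}_{\Lambda,\kappa,\mu_{-1},\mu_0}$ restricted to $V_i \times T^{\circ}\times\Gm^{\central}$ is of the form $\Omega^{(\nu_i^{(1)})}_{\Gm \text{ or } \Aff^1} \boxtimes \Omega^{(\nu_i^{(2)})}_{\Aff^1 \text{ or }\Gm} \boxtimes \Omega^{(\Lambda)}_{T^{\circ}}\boxtimes\Omega^{(\kappa)}_{\Gm^{\central}}$ where the exponents along the $x,y$ directions become, collecting the contributions of all four monomials: for $i=2$, $(\nu_2^{(1)},\nu_2^{(2)}) = (-\mu_{-1}-2\mu_0-\Lambda-3\kappa,\ \mu_0)$; for $i=3$, $(\mu_{-1},\ -\mu_0-\Lambda-\kappa)$; for $i=4$, $(-\mu_{-1}-2\mu_0-\Lambda-3\kappa,\ -\mu_0-\Lambda-\kappa)$. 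The $T^{\circ}$ and $\Gm^{\central}$ factors are unaffected since in each chart the inclusion is the identity in those directions.

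It then remains to apply Lemma \autoref{Lemma:cleannessforOmegamu1boxtimesOmegamu2} — together with its evident one-variable specialization, namely that the extension of $\Omega^{(\mu_1)}_{\Gm}\boxtimes\Omega^{(\mu_2)}_{\Aff^1}$ along $\Gm\times\Aff^1\hookrightarrow\Aff^2$ is clean iff $\mu_1\notin\Z$, which is proved by the same basis-of-sections argument and is also immediate from the fact that $\Omega^{(\mu_2)}_{\Aff^1}$ extends cleanly as a factor and external product commutes with $!$- and $*$-pushforward — chart by chart. Note that nothing needs to be extended in the $T^{\circ}$ and $\Gm^{\central}$ directions, so those factors contribute cleanly and can be ignored; formally one uses that $\inc: V_i\times T^{\circ}\times\Gm^{\central}\hookrightarrow\Aff^2\times T^{\circ}\times\Gm^{\central}$ factors as (inclusion in the first factors) $\times\,\id$, and $\can$ for an external product with an identity embedding is the external product of the cans, an isomorphism in the identity factor. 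Chart $1$ gives no condition; chart $2$ gives $\mu_{-1}+2\mu_0+\Lambda+3\kappa\notin\Z$ (the sign is irrelevant since $\Z=-\Z$); chart $3$ gives $\mu_0+\Lambda+\kappa\notin\Z$; chart $4$ gives both of those again, plus $\mu_0\notin\Z$. It remains to recover the condition $\mu_{-1}\notin\Z$: this does \emph{not} come from a chart of the cover of $\widetilde{\sh{C}}^{\times}$, because $O$ is open in $U_1$; rather, one observes that $\mu_{-1}$ is the monodromy of $\sh{L}$ transverse to the divisor $\overline{X_{s_1}}\cap O^c$ which is \emph{inside} $U_1$ — equivalently, one uses an additional chart. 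The cleanest remedy, and the main thing I would need to nail down, is to enlarge the picture slightly: cover $\widetilde{\sh{C}}^{\times}$ so that the complement of $\widetilde{\sh{C}}^{\times}\vert\widetilde{O}$ along \emph{all} its codimension-one components is visible; since $\overline{O}\setminus O$ consists of the two Schubert divisors, pulled back to $\widetilde{\sh{C}}^{\times}$ the relevant normal-crossing directions in the four charts are exactly the $x$- and $y$-axes, and in chart $1$ the inclusion $O=\Gm^2\hookrightarrow\Aff^2=U_1$ already contributes the pair of conditions $\mu_{-1}\notin\Z$, $\mu_0\notin\Z$ directly by Lemma \autoref{Lemma:cleannessforOmegamu1boxtimesOmegamu2}. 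I was too hasty above: chart $1$ is \emph{not} trivial — there $\widetilde{\sh{C}}^{\times}\vert\widetilde{O}\subsetneq\widetilde{\sh{C}}^{\times}\vert\widetilde{U_1}$, since $O\subsetneq U_1$. So the four conditions are: from $U_1$, $\mu_{-1}\notin\Z$ and $\mu_0\notin\Z$; from $U_2$, $\mu_{-1}+2\mu_0+\Lambda+3\kappa\notin\Z$; from $U_3$, $\mu_0+\Lambda+\kappa\notin\Z$; and $U_4$ is then redundant. The main obstacle is purely bookkeeping: correctly transporting the exponent datum of the torus local system through the three monomial transition maps of Corollary \autoref{Cor:Psiij} and checking that in each chart the resulting $(\nu_i^{(1)},\nu_i^{(2)})$ are as claimed — once that is done, Remark \autoref{Rem:cleannnessislocal} and Lemma \autoref{Lemma:cleannessforOmegamu1boxtimesOmegamu2} finish the proof immediately.
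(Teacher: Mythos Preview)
Your approach is the same as the paper's --- localize via Remark \autoref{Rem:cleannnessislocal}, transport $\sh{L}$ to each chart via the transition maps of Corollary \autoref{Cor:Psiij}, and apply Lemma \autoref{Lemma:cleannessforOmegamu1boxtimesOmegamu2} --- and your exponent computation $(\nu_i^{(1)},\nu_i^{(2)})$ in each chart is correct.

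There is, however, one residual bookkeeping error. The subvariety you extend from in chart $i$ should be $\Phi_i^{-1}(O \cap U_i) = \Phi_i^{-1}(O) = \Gm^2$ for \emph{every} $i$ (since $O \subseteq U_i$ for all $i$), not $V_i = \Phi_i^{-1}(U_1 \cap U_i)$. This is exactly the confusion you caught and fixed for $i=1$, but it persists for $i=2,3$: the inclusion in every chart is $\Gm^2 \hookrightarrow \Aff^2$ (times the identity on $T^{\circ}\times\Gm^{\central}$), so Lemma \autoref{Lemma:cleannessforOmegamu1boxtimesOmegamu2} applies directly and each chart contributes \emph{two} non-integrality conditions --- chart $2$: $\mu_{-1}+2\mu_0+\Lambda+3\kappa \notin \Z$ and $\mu_0 \notin \Z$; chart $3$: $\mu_{-1}\notin\Z$ and $\mu_0+\Lambda+\kappa\notin\Z$; chart $4$: only the two combined conditions, not $\mu_0\notin\Z$ as you wrote. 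Your one-variable specialization of the lemma is therefore unnecessary, and your final list of four conditions comes out right only because the conditions you dropped from charts $2$ and $3$ happen to be recovered from chart $1$.
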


\begin{proof}
By Remark \autoref{Rem:cleannnessislocal}, as $\{\widetilde{U}_i\}_{i=1,2,3,4}$ is an open cover of $\widetilde{\overline{O}}$, the extension of $\sh{L}_{\Lambda, \kappa, \mu_{-1}, \mu_0}$ w.r.t. the locally closed affine inclusion $\widetilde{\sh{C}}^{\times} \vert \widetilde{O} \hookrightarrow \widetilde{\sh{C}}^{\times}$
is clean if and only if it is clean w.r.t. the open affine inclusion $\widetilde{\sh{C}}^{\times} \vert \widetilde{O} \hookrightarrow \widetilde{\sh{C}}^{\times} \vert \widetilde{U_i}$
for each $i=1,2,3,4$. 
We have a commutative diagram
\begin{align*}
\xymatrix{ \widetilde{\sh{C}}^{\times} \vert \widetilde{O} \ar[rr]^{\inc} && \widetilde{\sh{C}}^{\times} \vert \widetilde{U_i} \\
\Gm^2\times T^{\circ} \times \Gm^{\central} \ar[u]_{\cong}^{\Phi_{i, T^{\circ}, \Gm^{\central}}} \ar[rr]_{\inc \times \id_{T^{\circ} \times \Gm^{\central}}}
&& \Aff^2 \times T^{\circ} \times \Gm^{\central} \ar[u]^{\cong}_{\Phi_{i, T^{\circ}, \Gm^{\central}}}}
\end{align*}
for each $i=1,2,3,4$. 
Hence, from the definition of $\sh{L}_{\Lambda,\kappa,\mu_{-1},\mu_0}$  and
 Lemma \autoref{Lemma:cleannessforOmegamu1boxtimesOmegamu2}
it follows that for $i=1$ this is the case if and only if $\mu_{-1} \notin \Z$ and $\mu_0 \notin \Z$. 
From the definition of $\sh{L}_{\Lambda,\kappa,\mu_{-1},\mu_0}$ and Corollary \autoref{Cor:Psiij} we conclude

\begin{align*}
\Phi^*_{i,T^{\circ}, \Gm^{\central}} \sh{L}_{\Lambda,\kappa,\mu_{-1},\mu_0} &= \Psi^*_{1i,T^{\circ}, \Gm^{\central}} \Omega_{\Lambda, \kappa, \mu_{-1}, \mu_0} = 
\begin{cases} 
\Omega_{\Lambda, \kappa, -\mu_{-1}-2\mu_0-\Lambda-3\kappa, \mu_0} & i=2 \\
\Omega_{\Lambda, \kappa, \mu_{-1}, -\mu_0-\Lambda-\kappa} & i=3 \\
\Omega_{\Lambda, \kappa, -\mu_{-1}-2\mu_0-\Lambda-3\kappa, -\mu_0-\Lambda-\kappa} & i=4 \;.
\end{cases}
\end{align*}
We used the canonical isomorphism $m^* \Omega^{(\mu)}_{\Gm} =\Omega^{(\mu)}_{\Gm} \boxtimes \Omega^{(\mu)}_{\Gm}$
 for the multiplication morphism $m: \Gm \times \Gm \rightarrow \Gm$. 
According to Lemma \autoref{Lemma:cleannessforOmegamu1boxtimesOmegamu2}, for $i=2$ the extension is thus clean if and only if $\mu_{-1} + 2\mu_0+\Lambda+3\kappa \notin \Z$
and $\mu_0 \notin \Z$. 
For $i=3$ it is clean if and only if $\mu_{-1} \notin \Z$ and $\mu_0+\Lambda+\kappa \notin \Z$. 
For $i=4$ it is clean if and only if $\mu_{-1}+2\mu_0+\Lambda+3\kappa \notin \Z$ and $\mu_0+\Lambda+\kappa \notin \Z$. 
\end{proof}

Each of the four non-integrality conditions of the theorem corresponds to an irreducible component of $\overline{O} \setminus O$. 
Note that in the case where the twist parameters are integral, i.e. $\Lambda, \kappa \in \Z$, we can view $\sh{L}_{\Lambda,\kappa,
\mu_{-1},\mu_0}$ as a local system in the category of right $\sh{D}$-modules on $O$, and the non-integrality conditions of the theorem reduce to $\mu_{-1}, \mu_0, \mu_{-1}+2\mu_0 \notin \Z$.
\begin{Rem}\label{Rem:cleannessequivalenttosimplicity} In the theorem, cleanness can equivalently be replaced by simplicity of $\inc_{\cdot} \sh{L}_{\Lambda, \kappa, \mu_{-1}, \mu_0}$
as a right $\sh{D}$-module on $\widetilde{C}^{\times} \vert \widetilde{\overline{O}}$ and also of
$\inc_{*}\sh{L}_{\Lambda, \kappa, \mu_{-1}, \mu_0}$ as a right $\sh{D}$-module on $\widetilde{C}^{\times}$.
\end{Rem}
It would  be interesting to describe the subquotients of the $!$- and $*$-extension of $\sh{L}_{\Lambda, \kappa, \mu_{-1}, \mu_0}$ in case at least one of the non-integrality conditions of the theorem is violated. \par

Let us finally comment on the implication of this result for the global sections of the right $\sh{D}$-module $\inc_* \sh{L}_{\Lambda, \kappa, \mu_{-1}, \mu_0}$
 on $\widetilde{C}^{\times}$. Because we can identify $\widetilde{C}^{\times}$ with a version of
 the enhanced affine flag variety of $\SL_2$, the correct global sections functor $\Gamma$ is defined in \cite{BD00}.
$\Gamma(\inc_* \sh{L}_{\Lambda, \kappa, \mu_{-1}, \mu_0})$ thus becomes a representation of the affine Kac-Moody algebra $\widehat{\mathfrak{sl}_2}$
of a certain level. It would be interesting to describe $\Gamma(\inc_* \sh{L}_{\Lambda, \kappa, \mu_{-1}, \mu_0})$ by an explicit algebraic construction. 
Let us assume that $(\Lambda, \kappa)$ is regular and antidominant in the
sense of \cite{BD00}[7.15.5]. If the hope of \cite{BD00}[7.15.7 (ii)] is true,
then, in view of \cite{BD00}[7.15.6 Theorem] and Remark \autoref{Rem:cleannessequivalenttosimplicity},
$\Gamma(\inc_* \sh{L}_{\Lambda, \kappa, \mu_{-1}, \mu_0})$ is an irreducible $\widehat{\mathfrak{sl}_2}$-representation if and
only if the condition on $\Lambda, \kappa, \mu_{-1}, \mu_0$ of Theorem \autoref{Thm:cleanness} holds. Even if the hope is false,
this condition is still necessary for the $\widehat{\mathfrak{sl}_2}$-representation to be irreducible.

\bibliographystyle{alpha}
\bibliography{references}
\end{document}